\documentclass[a4paper]{amsart}
\usepackage{amssymb}
\usepackage{amsmath}
\usepackage{xcolor}

\usepackage{tikz-cd}
\tikzcdset{arrow style=math font}

\usepackage{hyperref}
\hypersetup{colorlinks=true,allcolors=black}

\newtheorem{thm}{Theorem}[section]
\newtheorem*{thm*}{Theorem}

\newtheorem{cor}[thm]{Corollary}
\newtheorem*{cor*}{Corollary}

\newtheorem{prop}[thm]{Proposition}
\newtheorem{thmx}{Theorem}

\theoremstyle{definition}
\newtheorem{defn}[thm]{Definition}
\newtheorem{ex}[thm]{Example}

\theoremstyle{remark}

\newcommand{\ZZ}{{\mathbb  Z}}
\newcommand{\RR}{{\mathbb  R}}

\newcommand{\whM}{{ \hat M}}

\newcommand{\mft}{\mathfrak{t}}

\newcommand{\id}{\operatorname{id}}

\makeatletter
\newsavebox{\@brx}
\newcommand{\llangle}[1][]{\savebox{\@brx}{\(\m@th{#1\langle}\)}%
  \mathopen{\copy\@brx\kern-0.5\wd\@brx\usebox{\@brx}}}
\newcommand{\rrangle}[1][]{\savebox{\@brx}{\(\m@th{#1\rangle}\)}%
  \mathclose{\copy\@brx\kern-0.5\wd\@brx\usebox{\@brx}}}
\makeatother

\numberwithin{equation}{section}

\newcommand{\cm}[1]{\marginpar{*}{\tiny #1 }}

\newcommand\actionarrow[2]{#1 \begin{tikzcd}[ampersand replacement=\&, cramped, sep=small] \& \arrow[loop left] \phantom{X} \end{tikzcd} \hspace{-9pt} #2}

\begin{document}

\title{Polar and Torus Manifolds}

\author{Francisco C. Caramello Jr.}
\address{Departamento de Matemática, Universidade Federal de Santa Catarina, R. Eng. Agr. Andrei Cristian Ferreira, 88040-900, Florianópolis - SC, Brazil}
\email{francisco.caramello@ufsc.br}

\author{Dirk T\"oben}
\address{Dirk T\"oben, Departamento de Matemática, Universidade Federal de São Carlos, Brazil}
\email{dirktoben@ufscar.br}

\date{}

\subjclass[2020]{Primary 57S12, 53C20}

\begin{abstract}
We show that quasitoric manifolds can be equipped with an invariant metric for which the torus action is polar. More generally, we show that an infinitesimally polar action of a torus admits an invariant Riemannian metric making it polar if and only if the real orbifold Euler class of its quotient vanishes. Because the orbit spaces of quasitoric manifolds are contractible simple polytopes, this cohomological obstruction trivially vanishes.
\end{abstract}

\maketitle

\tableofcontents

\addtocontents{toc}{\protect\setcounter{tocdepth}{1}}

\section{Introduction}

A \emph{torus manifold} is a closed, orientable $2n$-dimensional manifold $M$ equipped with an effective $T^n$-action such that the fixed point set $M^{T}$ is non-empty. These spaces occupy a central place in algebraic topology and geometry, providing a rich class of highly structured manifolds whose global properties are deeply governed by the toric symmetry. A prominent subclass consists of the so called \emph{quasitoric manifolds}, where the torus action is furthermore locally standard and the orbit space is a simple polytope (see Definition \ref{definition quasitoric manifold}). A fundamental question regarding such manifolds is whether their symmetries can be realized via highly rigid Riemannian metrics. In this context, an isometric action of a Lie group $G$ on a Riemannian manifold $M$ is called \emph{polar} if there exists a section: an immersed submanifold that meets every orbit orthogonally. The existence of such a section reduces many deep geometric and analytic problems on $M$ to the lower-dimensional, often flat, geometry of the section itself. This makes polar actions a particularly rigid and geometrically rich class of isometric actions. The following result by Podestà and Thorbergsson connects them with Kähler geometry:

\begin{thm*}[{\cite[Theorem 1.2]{PodestaThorbergsson2002Coisotropic}}] An effective isometric action of a torus $T^n$ on a compact Kähler manifold $M$ with fixed points and complex dimension $n$ is always polar.
\end{thm*}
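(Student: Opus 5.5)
Since the torus $T=T^n$ is abelian with Lie algebra $\mathfrak{t}$, the action is polar exactly when the distribution $\mathcal{H}$ orthogonal to the orbits on the regular set $M_{\mathrm{reg}}$ is integrable; the section is then the closure of a leaf. So the plan is to show that on $M_{\mathrm{reg}}$ the normal distribution is $\mathcal{H}=J(\mathcal{V})$, where $\mathcal{V}$ is the tangent distribution of the orbits and $J$ the complex structure, and then to check that $J\mathcal{V}$ is integrable.

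First, the orbits are Lagrangian (isotropic) for the Kähler form $\omega$. For $X,Y\in\mathfrak{t}$ with fundamental fields $X^*,Y^*$, the function $f=\omega(X^*,Y^*)$ is $T$-invariant because $T$ is abelian and preserves $\omega$. Cartan's formula gives $df=-\iota_{[X^*,Y^*]}\omega+\dots$, and a direct computation using $L_{X^*}\omega=0$ and $[X^*,Y^*]=0$ shows $df=0$. So $f$ is constant, and at a fixed point $p\in M^T$ (which exists by hypothesis) all fundamental fields vanish, giving $f\equiv 0$. On $M_{\mathrm{reg}}$ the orbits are $n$-dimensional by effectiveness and compactness, since principal isotropy is trivial for an effective abelian action. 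Hence they are Lagrangian in the real $2n$-manifold, and $\omega(\mathcal{V},\mathcal{V})=0$ gives $g(J\mathcal{V},\mathcal{V})=0$. By dimension count, $\mathcal{H}=J\mathcal{V}$ on $M_{\mathrm{reg}}$.

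Next comes integrability. The fields $X^*$ are Killing and preserve $\omega$ and $g$, so they are real holomorphic, meaning $L_{X^*}J=0$. Then $[JX^*,JY^*]$ can be computed with the vanishing Nijenhuis tensor: $[JX,JY]=[X,Y]+J[JX,Y]+J[X,JY]$, and $[JX^*,Y^*]=-(L_{Y^*}J)X^*-J[Y^*,X^*]=0$. Hence $[JX^*,JY^*]=0$, and $J\mathcal{V}$ is spanned locally on $M_{\mathrm{reg}}$ by the commuting frame $JX_i^*$. It is therefore integrable by Frobenius.

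Finally, to obtain a genuine section, I take a leaf $L$ of $\mathcal{H}$ through a regular point and show it is totally geodesic. For an abelian action, integrability of the orthogonal distribution already makes the leaves totally geodesic, because the second fundamental form of $\mathcal{H}$ pairs with Killing fields via $g(\nabla_U V,X^*)=-g(V,\nabla_U X^*)$, which is skew in $U,V$; since integrability makes it symmetric, it vanishes. The complete totally geodesic immersed submanifold $\exp(\mathcal{H}_p)$ then meets all orbits, since every orbit is reached by geodesics orthogonal to it from $Tp$, and it meets them orthogonally (the standard criterion, e.g. Heintze–Liu–Olmos). I expect this last step to be the main obstacle: controlling the section near singular orbits, where the normal space jumps. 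I would handle it by the standard result that integrability of the normal distribution on the regular part suffices for polarity.
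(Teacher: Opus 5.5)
Your argument is correct, but one step needs explicit justification. The hypothesis is only that $T$ acts by isometries, yet you treat $L_{X^*}\omega=0$ and $L_{X^*}J=0$ as given. You use the first, together with $d\omega=0$, to get $df=0$, and the second to kill $[JX^*,Y^*]$. Both hold because on a \emph{compact} K\"ahler manifold every Killing field is real holomorphic, hence preserves $J$ and $\omega=g(J\cdot,\cdot)$. This is the Kobayashi--Nomizu fact the paper invokes, and it is where compactness is genuinely used. On $\mathbb{C}^2$, writing $z_j=x_j+iy_j$, rotations of the $(x_1,x_2)$-plane fixing $y_1,y_2$ are isometries that are not holomorphic. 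With that inserted, the rest of your chain is sound: $\omega(X^*,Y^*)$ vanishes at a fixed point, principal orbits are Lagrangian, $\mathcal{H}=J\mathcal{V}$, the frame $JX_i^*$ commutes, the leaves are totally geodesic, and the Heintze--Liu--Olmos criterion handles the singular orbits.

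The paper itself gives no proof of this statement; it quotes it from Podest\`a--Thorbergsson. Its surrounding discussion takes a different route and reaches a weaker conclusion:
\begin{itemize}
\item Isometric implies holomorphic; fixed points make the action Hamiltonian (Frankel); and Delzant shows $M/T$ is a simple polytope with a locally standard action. So $M$ is quasitoric.
\item Corollary \ref{cor-toricpolar} then gives \emph{some} invariant polar metric. It uses vanishing of $e_\RR$ over the contractible polytope, a flat connection on $\whM\to M/T$, and local polar models pulled back by gauge transformations and glued with a partition of unity.
\end{itemize}
That route discards the K\"ahler metric, since the glued metric is in general not the given one. In exchange it needs no K\"ahler structure and applies to every infinitesimally polar torus action with $e_\RR=0$. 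Your direct argument uses the K\"ahler structure essentially: the fixed point forces Lagrangian orbits, and $J$ produces commuting normal fields. What it buys is the rigid statement that the given metric is polar. It also describes the sections as totally geodesic Lagrangian integral manifolds of $J\mathcal{V}$. Their lifts to $\whM$ are exactly the invariant transversals of condition (4) in the proof of Theorem \ref{thm-infpolar}.
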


This is also intertwined with other results in symplectic and toric topology. First, since the action is isometric on a compact Kähler manifold, it is by holomorphic isometries \cite[p. 247]{KobayasgiNomizu1963}. The original statement of \cite[Theorem 1.2]{PodestaThorbergsson2002Coisotropic} supposes $\chi(M) > 0$, but this hypothesis is only used to guarantee fixed points. By results of Frankel, such a torus action is Hamiltonian \cite[Lemmas 1 and 2]{Frankel1059}, making $M$ a symplectic torus manifold. By Delzant's classification, the image of the moment map is a simple convex polytope, naturally identified with the orbit space $M/T^n$ \cite{Delzant1988}. Furthermore, the action is locally standard \cite[Proof of Lemme 2.4]{Delzant1988}. In other words, under the Podestà and Thorbergsson hypotheses, $M$ is quasitoric.

A natural motivating question for our work is whether this connection extends to more general topological and Riemannian settings. Indeed, we show that the Kähler constraints are unnecessary:

\begin{thmx}[Corollary {\ref{cor-toricpolar}}]\label{theorem A}
Every quasitoric manifold admits an invariant polar metric.
\end{thmx}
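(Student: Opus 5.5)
The plan is to build a polar metric directly by gluing together local models. A quasitoric manifold $M^{2n}$ over a simple polytope $P$ is, by the Davis--Januszkiewicz construction, equivariantly homeomorphic (and, for a suitable smooth structure compatible with local standardness, equivariantly diffeomorphic) to $M(P,\lambda)=T^n\times P/\sim$. Here $\lambda$ is the characteristic function, and the relation collapses, over a point in the relative interior of a face $F$, the subtorus $T_F$ spanned by the $\lambda$-values of the facets containing $F$.

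The key observation is that the obstruction to polarity is a ``twisting'' of the principal $T^n$-bundle over the interior of $P$ with the horizontal distribution. First I will show this bundle is trivial. Then I will choose a metric whose orbit-orthogonal distribution is integrable, with leaves given by a global section $\sigma\colon P\to M$.

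Concretely, the steps are as follows.

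(1) Fix a smooth structure in which, near each vertex $v$, $M$ is equivariantly diffeomorphic to $\mathbb{C}^n$ with the standard action twisted by the automorphism of $T^n$ determined by $\lambda$ at $v$. In the chart, $P$ is modeled on $\RR_{\ge0}^n$, and the real locus $\RR_{\ge 0}^n\subset\mathbb{C}^n$ is a local section for the flat metric. The flat metric is polar there, with section $\RR^n\subset\mathbb{C}^n$.

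(2) The canonical section $s\colon P\to M(P,\lambda)$, $p\mapsto[1,p]$, is compatible with these local real loci after adjusting each chart by a torus translation. Since $T^n$ is connected, these translations can be chosen consistently. Indeed, the transition maps between vertex charts on the free part are of the form $(t,p)\mapsto (t\cdot g(p),p)$ with $g\colon \mathrm{int}\,P\to T^n$. Since $P$ is contractible, $g$ lifts to $\mathfrak{t}$ and can be homotoped away. This is exactly the vanishing of the (real) Euler class of the orbifold $P$.

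(3) Choose invariant metrics $g_v$ on a cover of $M$ by vertex neighborhoods $U_v$, each polar with $s(P)\cap U_v$ contained in a section. Average with an invariant partition of unity $\{\phi_v\}$ pulled back from $P$.

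The main obstacle is step (3): a convex combination of polar metrics with a common section need not be polar. Orthogonality of $s(P)$ to orbits is preserved under convex combinations, since it is a linear condition on the metric at each point. The remaining requirement is that the section be totally geodesic, and here I would use the reflection trick. The map $[t,p]\mapsto[t^{-1},p]$ is a well-defined involutive diffeomorphism $\iota$ of $M(P,\lambda)$, since inversion preserves each $T_F$. Its fixed point set contains $s(P)$ and its $2^n$-fold translates by elements of order two, which assemble into the real quasitoric manifold $M_\RR$.

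Replacing each local metric, and the glued metric, by its $\iota$-average, the metric becomes $\iota$-invariant. Each component of $\mathrm{Fix}(\iota)$ is then totally geodesic. Orthogonality to the orbits is kept because $d\iota$ acts by $-1$ on orbit directions and by $+1$ on $T s(P)$. Moreover $\iota$ normalizes the torus action, since $\iota(t\cdot x)=t^{-1}\iota(x)$, so the averaged metric remains $T^n$-invariant.

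Thus $M_\RR$ is a closed, totally geodesic submanifold of dimension $n$ meeting every orbit, and meeting it orthogonally at regular points; at singular points one checks orthogonality in the vertex models. This makes it a section, so the metric is polar. I expect the delicate points to be the smoothness and compatibility of the charts in (1)--(2); the polarity argument itself reduces to the fixed-point-of-isometry principle.
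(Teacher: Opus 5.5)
Your route is genuinely different from the paper's. In outline it can be made to work, but you have put the difficulty in the wrong place.

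\textbf{Comparison with the paper.} The paper never uses the Davis--Januszkiewicz model. It proves the general criterion of Theorem \ref{thm-infpolar} for arbitrary infinitesimally polar torus actions by passing to the Grassmannian blow-up $\whM$. There $T$ acts locally freely, so $\whM\to M/T$ is a principal $T$-orbibundle, and $e_\RR=0$ gives a flat connection. Its horizontal leaves project to immersed transversals $N\subset M$. Local polar metrics on slice tubes are moved onto $N$ by gauge transformations, descended from $\whM$ to $M$ via Haefliger--Salem, and then glued with a partition of unity. The corollary is then just infinitesimal polarity plus $H^2(P;\RR^n)=0$.

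You instead use the quasitoric structure directly: a global cross-section $s\colon P\to M$ adapted to the locally standard charts, the conjugation involution $\iota$, and $M_\RR=\mathrm{Fix}(\iota)$ as section. This buys a sharper picture: a closed embedded section (the small cover over $P$), realized as the fixed set of an isometric involution. The paper's route buys generality and an equivalence, and it needs only a flat connection with possibly non-closed leaves rather than a trivialization.

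\textbf{Step (3) is not an obstacle.} A submanifold of dimension equal to the cohomogeneity that meets all orbits orthogonally is automatically totally geodesic. The action fields $X$ are normal to it, and $\langle\nabla_Y Z,X\rangle=-\langle Z,\nabla_Y X\rangle$ is skew in tangent $Y,Z$ by the Killing equation, yet symmetric because $[Y,Z]$ is tangent. At regular points such $X$ span the normal space. (The paper's definition only asks for orthogonality anyway.) So $\sum_v\phi_v g_v$ is already polar, and this is exactly how the paper glues. Conversely, once $\iota$ is smooth no gluing is needed at all: averaging any $T$-invariant metric over $\iota$ makes $\mathrm{Fix}(\iota)$ orthogonal to the orbits at every point, by the $\pm1$-eigenspace argument.

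\textbf{Step (2) is where the gap is.} All the real content sits here, and the justification as written does not work. Connectedness of $T$ is irrelevant, constant torus translations do not suffice, and lifting and homotoping one transition function does not trivialize a cocycle. What you actually need is the following.
\begin{itemize}
\item (a) Work on the given smooth $M$. Davis--Januszkiewicz only give an equivariant homeomorphism to $M(P,\lambda)$, so you may not choose the smooth structure. Show that the real loci of overlapping locally standard charts differ by $T$-valued functions that extend smoothly over the faces, as functions on $M/T$. This uses the form of equivariant diffeomorphisms between invariant open sets of $\mathbb{C}^n$; in the untwisted case these are $z\mapsto(z_j h_j(|z_1|^2,\dots,|z_n|^2))_j$ with $h_j$ nonvanishing.
\item (b) This \v{C}ech cocycle is a coboundary because $H^1(P;\underline{T})\cong H^2(P;\ZZ^n)=0$, where $\underline{T}$ is the sheaf of smooth $T$-valued functions on $P$.
\item (c) Correct the charts by gauge transformations $z\mapsto f_v(\pi(z))\cdot z$.
\end{itemize}
With (a)--(c), $s$ and $\iota$ are smooth and the rest of your argument goes through. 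This smoothness across singular orbits is precisely what the paper's passage to a locally free action on $\whM$ takes care of.
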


This provides a canonical and rigid geometric realization for a broad class of spaces traditionally studied through the lens of algebraic topology. However, Theorem \ref{theorem A} is in fact a specific application of a much broader geometric principle. The infinitesimal counterpart of polarity is fundamentally local and representation-theoretic: an action is \emph{infinitesimally polar} if all of its isotropy representations on the normal spaces to the orbits are polar. Under what conditions does an infinitesimally polar action admit an invariant Riemannian metric that makes it polar? We resolve this question completely for the case of torus actions. By investigating the natural principal $T$-orbibundle associated with the Grassmannian blow-up of infinitesimal sections, we identify the precise cohomological obstruction to global polarity:

\begin{thmx}[Theorem \ref{thm-infpolar}] An infinitesimally polar $T$-manifold is polar for some $T$-invariant metric if and only if its real Euler class $e_\RR(M,T)$ vanishes.
\end{thmx}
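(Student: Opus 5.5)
The plan is to reduce polarity of a torus action to the integrability of a distribution transverse to the orbits, and then to identify the obstruction to integrability with a characteristic class of a principal $T$-orbibundle.

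\textbf{Setup.} First, restrict to the regular part $M_0$, where $T$ acts with finite isotropy. For any invariant metric, the horizontal distribution $\mathcal H = (T\cdot x)^\perp$ is a connection on the principal $T$-orbibundle $M_0 \to M_0/T$. For a torus action, polarity is equivalent to this connection being flat, i.e.\ $\mathcal H$ integrable. Indeed, for abelian $T$ and Killing fields $X,Y$ generated by the action, the standard criterion says the action is polar iff $\langle \nabla_X \xi, Y\rangle$-type obstructions vanish; this amounts to $d\theta|_{\mathcal H}=0$ for the connection form $\theta$ built from the metric.

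\textbf{Compactification.} To treat singular orbits, I will use infinitesimal polarity. At each point the slice representation is polar, so it has a well-defined infinitesimal section. This lets me pass to the Grassmannian blow-up $\hat M$, where each point $x$ is replaced by the set of infinitesimal sections (subspaces of $T_xM$ of cohomogeneity dimension) meeting the orbit orthogonally. On $\hat M$ the $T$-action should have only finite isotropy. Hence $\hat M \to \hat M/T = M/T$ is a principal $T$-orbibundle over the orbifold $M/T$, and it carries real Euler class $e_\RR(M,T)\in H^2(M/T;\mft)$ (orbifold de Rham cohomology, via Chern--Weil).

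\textbf{Necessity.} If the action is polar for some metric $g$, the sections give a flat connection on $\hat M$: the horizontal space at an infinitesimal section is that section itself. Its curvature is zero, so $e_\RR = 0$.

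\textbf{Sufficiency.} If $e_\RR = 0$, start with any invariant metric, which gives a connection $\theta$ on $\hat M$ with curvature $\Omega$, a basic closed $\mft$-valued $2$-form. Vanishing of the class gives $\Omega = d\beta$ with $\beta$ basic, so $\theta' = \theta - \beta$ is flat. I then need to build an invariant metric on $M$ whose orbit-orthogonal distribution is $\ker\theta'$. On $M_0$ I take $g' = g|_{\ker\theta'} + \langle \theta',\theta'\rangle_{Q}$ with the orbit metric unchanged. This realizes the flat distribution as $\mathcal H$, and its integral leaves, extended through singular strata, serve as sections.

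\textbf{Main obstacle.} The hard part is smoothness of $g'$ near singular orbits. The correction $\beta$ must be chosen compatible with the local models $T\times_{T_x} V$, so that near a singular orbit $\ker\theta'$ agrees with the standard infinitesimal sections of the polar slice representation. My first attempt is to choose $\beta$ vanishing near the singular strata. This should be possible because on a tubular neighborhood of the strata the local polar structure already gives a flat connection, so $\Omega$ is exact there with a canonical primitive. I would then use a Mayer--Vietoris/partition-of-unity argument in basic cohomology to adjust the primitive so that it agrees with the canonical local one near the strata. Once that is done, the metric is locally the standard polar model near singular orbits, and the sections close up smoothly.
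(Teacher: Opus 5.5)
\textbf{Verdict: genuine gap in sufficiency.} Your necessity argument, and the equivalence of $e_\RR(M,T)=0$ with the existence of a flat connection on $\hat\pi\colon\whM\to M/T$, are fine and match the paper. The problem is the sufficiency step you flag as the main obstacle. The mechanism you propose for it does not work as stated, for three reasons.

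\emph{(a) No flat model connection near the whole singular set.} Choosing $\beta$ so that $\ker\theta'$ is the model distribution near the singular strata presupposes a flat connection $\theta_W$ on a neighbourhood $W$ of the \emph{entire} singular set that agrees with the polar model near every singular orbit. The slice theorem only gives polar model metrics $h_i$ on individual tubes $U_i$. Their flat connections $\theta_i$ cannot be glued: for an invariant partition of unity, the curvature of $\sum_i\phi_i\theta_i$ is $\sum_i d\phi_i\wedge\theta_i$, which is nonzero in general. So there is no ``canonical primitive'' near the strata; producing $\theta_W$ is the original problem restricted to $W$.

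\emph{(b) The primitive may not extend.} Even given $\theta_W$, forcing a global primitive to equal $\theta-\theta_W$ near the strata requires the class of the closed basic form $(\beta-\theta+\theta_W)|_W$ to lie in the image of $H^1(M/T)\to H^1(W/T)$. This can fail. For a quasitoric $4$-manifold, $M/T$ is a polygon and $W/T$ is a collar of its boundary, with $H^1\cong\RR$.

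\emph{(c) Smoothness of $g'$ is still unproved.} Your metric $g'=g|_{\ker\theta'}\oplus g|_{\mathrm{orbits}}$ lives only on the regular part and is built from $g$, not from a model metric. So even model leaves near the strata would not make $g'$ ``locally the standard polar model'', and smoothness across singular orbits remains open.

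\textbf{The missing idea.} You do not need to adjust the connection near the strata at all. Take \emph{any} flat connection and a leaf $\hat N$. Then $N=\rho(\hat N)$ is immersed, because $T_{\hat p}\hat N\cap\ker d\rho_{\hat p}=0$, where $\ker d\rho_{\hat p}\subset\mft\hat p$.

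On a tube $\hat U_i\cong T\times_{W(\Sigma_i)}\Sigma_i$ the base $\Sigma_i/W(\Sigma_i)$ is a contractible chart. Hence the plaques of $\hat N$ are carried onto the model lift $\hat\Sigma_i$ by a gauge transformation $\hat x\mapsto F_i(\hat\pi(\hat x))\hat x$, with $F_i\colon U_i/T\to T$ smooth (Haefliger--Salem). This gauge transformation \emph{descends to $M$}: $\varphi_i(x)=F_i(\pi(x))x$ is a $T$-equivariant diffeomorphism of $U_i$ taking $N\cap U_i$ onto translates of the model section $\Sigma_i$. Therefore $g_i=\varphi_i^*h_i$ is a smooth invariant metric on $U_i$ for which $N$ is orthogonal to the orbits, including at singular orbits.

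Orthogonality of $TN$ to the orbits is linear in the metric, so $g=\sum_i\phi_ig_i$ is polar with section $N$. In short: instead of bending the connection to fit a fixed local model, move the local model metric by a gauge transformation to fit the connection, and glue metrics rather than connections.
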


Since simple polytopes are contractible, their real orbifold cohomology trivially vanishes, yielding Theorem A.

\section{Polar actions and quasitoric manifolds} \label{sec-intro}

All actions are assumed to be differentiable. All manifolds are assumed to be connected. Let $G$ be a Lie group acting on $M$. We denote by $\pi \colon M\to M/G$ the canonical projection.

\begin{defn}
  Let $M$ be a Riemannian manifold on which $G$ acts by isometries. Let $q$ be the cohomogeneity. The action is called {\em polar} if there is an immersed submanifold $\Sigma$ of dimension $q$ that meets all orbits such that the intersection is everywhere orthogonal.
\end{defn}
\begin{ex}
    Examples of polar actions are conjugation of compact Lie groups, isotropy actions and Hermann actions on symmetric spaces.
\end{ex}
\begin{defn}
  Let $M$ be a $G$-manifold. The action is called {\em infinitesimally polar} if all isotropy representations $\actionarrow{G_x}{T_xM/T_xG}$ are polar with respect to some inner product.
\end{defn}
Originally an isometric action on a Riemannian manifold $(M,g)$ is said to be infinitesimally polar if all isotropy representations are polar with respect to the {\em induced} inner products \cite{Lytchak2010Curvature}. Our seemingly weaker, more topological definition is in fact equivalent for proper actions.

\begin{prop}\label{prop:metric}
    Let $(V,G)$ be a polar representation with respect to an inner product $\langle \ , \ \rangle$. Then any other $G$-invariant inner product is also polar with the same sections.
\end{prop}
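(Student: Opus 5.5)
The plan is to reduce everything to a $G$-equivariant symmetric automorphism relating the two inner products, and then to show that a section $\Sigma$ of $\langle\ ,\ \rangle$ is invariant under it. Let $\langle\ ,\ \rangle'$ be a second $G$-invariant inner product. There is a unique $\langle\ ,\ \rangle$-symmetric positive definite $A\in GL(V)$ with $\langle u,w\rangle'=\langle Au,w\rangle$, and $A$ commutes with $G$ because both inner products are $G$-invariant.

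Recall that for a representation the sections may be taken to be linear subspaces through $0$. Fix a section $\Sigma$ for $\langle\ ,\ \rangle$. Then $\Sigma=\nu_v(Gv)=(\mathfrak g\cdot v)^{\perp}$ for a regular $v\in\Sigma$, and orthogonality everywhere reads
\[
\langle X\cdot u,w\rangle=0 \quad\text{for all } X\in\mathfrak g,\ u,w\in\Sigma .
\]
The subspace $\Sigma$ still meets every orbit, since that condition does not involve the metric. So $\Sigma$ is a section for $\langle\ ,\ \rangle'$ as soon as $\langle A X\cdot u,w\rangle=0$ for all such $X,u,w$. Because $A$ is symmetric, this equals $\langle X\cdot u,Aw\rangle$. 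It therefore suffices to prove the key claim: \emph{$\Sigma$ is $A$-invariant}.

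To prove the key claim, decompose $V=\bigoplus_\lambda E_\lambda$ into the eigenspaces of $A$. These are $G$-invariant, since $A$ commutes with $G$, and mutually $\langle\ ,\ \rangle$-orthogonal, since $A$ is symmetric. Let $p_\lambda$ denote the orthogonal projections, which are $G$-equivariant. The claim follows once we show that the section splits,
\[
\Sigma=\bigoplus_\lambda (\Sigma\cap E_\lambda),
\]
because $A$ acts as the scalar $\lambda$ on each summand. I would prove the splitting in two steps.
\begin{enumerate}
\item First show that $p_\lambda(\Sigma)$ is orthogonal to $\mathfrak g\cdot p_\mu(\Sigma)$ for all $\lambda,\mu$. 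For $u,w\in\Sigma$ and $X\in\mathfrak g$ we have $\langle X\cdot p_\mu u,p_\lambda w\rangle=\langle X\cdot u_\mu, w_\lambda\rangle$, which vanishes automatically when $\lambda\neq\mu$. So the work is to control the diagonal terms, using that the sum over $\lambda$ of these terms is $0$.
\item Then compare dimensions. The subspace $\Sigma':=\bigoplus_\lambda p_\lambda(\Sigma)$ contains $\Sigma$ and contains the regular point $v$. If step 1 shows $\Sigma'\perp\mathfrak g\cdot v$, then $\Sigma'\subseteq\nu_v(Gv)=\Sigma$, hence $\Sigma'=\Sigma$ and the splitting holds.
\end{enumerate}

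The main obstacle is step 1, the diagonal terms, i.e.\ showing $p_\lambda w\perp \mathfrak g\cdot v$ for each $\lambda$. My first attempt would use the principal isotropy group $H=G_v$. Its slice representation on $\nu_v=\Sigma$ is trivial, so $\Sigma\subseteq V^H$. The projections $p_\lambda$ are equivariant, so $p_\lambda(\Sigma)\subseteq V^H$ as well, and $\mathfrak g\cdot v$ together with $\Sigma$ spans $V$. One would then try to show that $V^H\cap(\mathfrak g\cdot v)$ is orthogonal to each $E_\lambda$-component of $\Sigma$, by using that $G_v$ is also the isotropy of the regular points of $\Sigma$.

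If this does not go through directly, the fallback is the known fact (Dadok) that a polar representation restricted to invariant orthogonal summands is polar, with sections splitting accordingly. That fact yields exactly the displayed decomposition of $\Sigma$ and completes the argument.
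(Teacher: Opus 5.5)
Your proposal is correct and, via the Dadok fallback, is essentially the paper's proof: you split $V$ into the $G$-invariant, mutually orthogonal eigenspaces of the equivariant symmetric operator $A$, use Dadok's Theorem~4 to get $\Sigma=\bigoplus_\lambda(\Sigma\cap E_\lambda)$, and conclude because $A$ is scalar on each $E_\lambda$. The principal-isotropy attempt gives no leverage when $G_v$ is trivial (e.g.\ $T^n$ on $\mathbb{C}^n$, where $V^{G_v}=V$), but your step~1 does close elementarily, without Dadok, if you vary the base point: for every $u\in\Sigma$ whose orbit has maximal dimension, $\langle X\cdot u,p_\lambda u\rangle=\langle X\cdot u_\lambda,u_\lambda\rangle=0$, so $p_\lambda u\in\nu_u(Gu)=\Sigma$, and such $u$ form a nonempty open subset of $\Sigma$, hence span it.
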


\begin{proof}
    Let $\llangle \ , \ \rrangle$ be another $G$-invariant inner product on $V$. The linear map $A \colon V\to V$ defined by $\langle X,Y \rangle=\llangle AX,Y\rrangle$ is self-adjoint and positive definite with respect to $\llangle \ , \ \rrangle$. Therefore the decomposition $V=\bigoplus E_i$ of $V$ into eigenspaces $E_i$ of $A$ with eigenvalues $\lambda_i$ is orthogonal with respect to $\llangle \ , \ \rrangle$, and consequently for $\langle \ , \ \rangle$ as well. The $G$-invariance of both inner products implies that $A$ is $G$-equivariant, i.e., $gA=Ag$ for every $g\in G$. Thus the $E_i$ are invariant under the action. We fix $i$. The action on $E_i$ is polar by \cite[Theorem 4(i)]{Dadok} with respect to the first inner product. Since $A|_{E_i}=\lambda_i\id_{E_i}$, the second inner product is a scalar multiple of the first on $E_i$. Therefore, the representation $(E_i,G)$ is also polar with respect to the second inner product, and the sections $\Sigma_i$ on $E_i$ are identical. 

   We will now see that a section $\Sigma$ of $(V,G)$ is of the form $\Sigma=\oplus \Sigma_i$ for both scalar products. By \cite[Theorem 4(ii)]{Dadok}, there are Lie groups $H_i$ and polar representations $(E_i,H_i)$  such that the representation $(V,G_0)$ is orbit-equivalent to the product representation $\bigoplus(H_i,E_i)$. In particular, $(E_i,H_i)$ are orbit equivalent to $(E_i,G_0)$ and therefore have the same sections $\Sigma_i$. Hence the sections of $(V,G_0)$ and thus of $(V,G)$ are of the form $\Sigma=\oplus \Sigma_i$. 
\end{proof}

To conclude the equivalence of both definitions of infinitesimally polar actions for proper actions, we observe the following. Consider a proper $G$-action that is infinitesimally polar in the sense of the above definition. Because the action is proper, the manifold admits a $G$-invariant Riemannian metric $g$, and the isotropy group $G_x$ at any point is compact. By hypothesis, the isotropy representation of $G_x$ on the normal space $\nu_x(Gx)$ is polar with respect to some inner product. Since $g_x$ is a $G_x$-invariant inner product, Proposition \ref{prop:metric} guarantees that the representation is also polar with respect to $g_x$. Thus, the action is infinitesimally polar in the original Riemannian sense.
\color{black}

\begin{ex}[Standard model]
  The $T^n$-action on $\mathbb{C}^n$ by coordinatewise complex multiplication, called the standard representation of $T^n$, is polar with section $\RR^n$ and Weyl group $\ZZ^n$ acting by reflections. The orbit space is the the good orbifold 
  \[\ \RR^n/\ZZ^n=\mathbb{R}_{\geq}^n=\{x\in\mathbb{R}^n \mid x_i\geq 0\ \text{for all}\ i\}.\]
\end{ex}

\begin{defn}[{\cite{DavisJanuszkiewicz1991}}]\label{definition quasitoric manifold} An action of a torus $T^{n}$ on a $2n$-dimensional manifold $M^{2n}$ is \emph{locally standard} if every point has a $T^n$-invariant neighborhood that is equivariantly diffeomorphic, up to an automorphism of $T^n$, to an open $T^n$-invariant subset of $\mathbb{C}^{n}$ with the standard linear $T^{n}$-action. More precisely, this means there exists a diffeomorphism $f$ and an automorphism $\rho \in \text{Aut}(T^n)$ such that $f(t x) = \rho(t)  f(x)$). A \emph{quasitoric manifold} is a closed $2n$-dimensional manifold equipped with a locally standard $T^n$-action such that the orbit space $M/T$ is an $n$-dimensional simple polytope.
\end{defn}

Davis and Januszkiewicz introduced quasitoric manifolds as a purely topological analogue to smooth toric varieties. They showed that one can construct them directly from a combinatorial object: a simple convex polytope $P$ equipped with a \emph{characteristic function} which assigns a subgroup of $T^n$ to each facet of $P$. The quasitoric manifold is then constructed by coherently gluing the data, in analogy with Delzant’s construction of toric varieties from fans.

\begin{prop}\label{prop:quasitoric infinitesimally polar}
    Let $M^{2n}$ be a quasitoric manifold. Then $\actionarrow{T^n}{ M^{2n}}$ is infinitesimally polar. Moreover, $M/T$ is a Coxeter orbifold.
\end{prop}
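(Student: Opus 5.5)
The claim is local in nature, so the plan is to reduce everything to the standard model via the locally standard charts. Fix $x\in M$ and choose, by Definition \ref{definition quasitoric manifold}, a $T^n$-invariant neighborhood $U$ of $x$, a diffeomorphism $f\colon U\to W\subset\mathbb{C}^n$ onto an open invariant subset, and $\rho\in\operatorname{Aut}(T^n)$ with $f(tx)=\rho(t)f(x)$. Since $f$ is $\rho$-equivariant, $\rho$ restricts to an isomorphism $T^n_x\to T^n_{f(x)}$. The differential $df_x$ carries $T_x(T^nx)$ onto $T_{f(x)}(T^nf(x))$, so it induces a linear isomorphism
\[ T_xM/T_x(T^nx)\longrightarrow T_{f(x)}\mathbb{C}^n/T_{f(x)}(T^n f(x)) \]
that intertwines the two slice representations through $\rho$. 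Polarity of a representation with respect to some inner product is preserved by such an equivariant linear isomorphism: pull back the inner product and the section. It therefore suffices to prove that the slice representations of the standard representation of $T^n$ on $\mathbb{C}^n$ are polar.

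For $z\in\mathbb{C}^n$, set $I=\{i: z_i=0\}$. The isotropy group is $T^I=\{t: t_j=1 \text{ for } j\notin I\}$. The normal space to the orbit splits as $\mathbb{C}^I\oplus\RR^{I^c}$, where the $\RR^{I^c}$ factor is the radial directions in the coordinates with $z_j\neq0$. The group $T^I$ acts on $\mathbb{C}^I$ by the standard representation and trivially on $\RR^{I^c}$. This is polar for the Euclidean inner product, with section $\RR^I\oplus\RR^{I^c}$, by the Standard model example. This proves that the action is infinitesimally polar.

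For the Coxeter orbifold claim, the same charts show that the orbit map identifies a neighborhood of $\pi(x)$ in $M/T$ with an open subset of $\mathbb{C}^n/T^n=\RR^n_{\geq}=\RR^n/\ZZ_2^n$. Here $\ZZ_2^n$ acts on $\RR^n$ by the reflections in the coordinate hyperplanes, which is the Weyl group of the standard model. The automorphism $\rho$ only relabels the torus, so it induces a homeomorphism of orbit spaces compatible with these charts. Each point of $M/T$ thus has an orbifold chart $\RR^n\supset\widetilde W\to \widetilde W/\ZZ_2^k$ whose local group is generated by reflections. This is exactly the definition of a reflection, or Coxeter, orbifold, with local models the corners $\RR^k_{\geq}\times\RR^{n-k}$ of the simple polytope.

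The main obstacle is checking that these local charts are compatible, that is, that the transition maps are smooth orbifold embeddings. A transition map comes from a $\rho$-equivariant diffeomorphism between open invariant subsets of $\mathbb{C}^n$, and one has to see that it descends to a smooth map of $\RR^n_{\geq}$ in the orbifold sense. I would handle this with Schwarz's theorem: invariant smooth functions on $\mathbb{C}^n$ are smooth functions of $|z_i|^2$. So an equivariant diffeomorphism induces a map of the $(|z_1|^2,\dots,|z_n|^2)$ coordinates that lifts to a $\ZZ_2^n$-equivariant smooth map on $\RR^n$ in the coordinates $r_i=\pm|z_i|$. Alternatively, one can take the sections of the slice representations, glued via the slice theorem, as the orbifold charts directly.
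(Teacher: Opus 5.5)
Your proposal is essentially the paper's proof. You transport the slice representation through the locally standard chart; your explicit bookkeeping of the twisting automorphism $\rho$ is a detail the paper suppresses. You then observe that $T^I$ acts standardly on $\mathbb{C}^I$ and trivially on the radial $\RR^{I^c}$ with section $\RR^n$, and identify $\mathbb{C}^n/T^n$ with $\RR^n_{\geq}=\RR^n/\ZZ_2^n$, whose local groups are generated by reflections.

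The chart-compatibility issue you flag is not addressed in the paper at all; it only records that neighborhoods in $M/T$ are homeomorphic to open subsets of $\RR^n_{\geq}$. Your Schwarz-theorem route does close it, once you note one more fact. Near a face $\{x_i=0\}$, the component of the induced map that vanishes there has the form $x_i u(x)$ with $u>0$, because by equivariance the diffeomorphism is complex linear or antilinear to first order in the $z_i$-direction. That component therefore lifts smoothly as $r_i\sqrt{u(r^2)}$.
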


\begin{proof}
    If $\actionarrow{T^n}{M^{2n}}$ is locally standard, the isotropy representation at $x$ is equivalent to the isotropy representation at some corresponding point $z \in \mathbb{C}^n$. Up to a permutation of coordinates, let $z = (z_1, \dots, z_n) \in \mathbb{C}^n$ be a point where the first $k$ coordinates are zero and the remaining $n-k$ coordinates are non-zero. The isotropy group at $z$ is the standard coordinate torus $T^k \subset T^n$. The normal space $\nu_z T^n  z$ splits as $\mathbb{C}^k \oplus \mathbb{R}^{n-k}$, where the $\mathbb{R}^{n-k}$ factor corresponds to the radial directions of the non-zero complex coordinates. 
    
    The isotropy representation of $T^k$ on this normal space acts in the standard way on $\mathbb{C}^k$ and trivially on $\mathbb{R}^{n-k}$. The standard $\actionarrow{T^k}{\mathbb{C}^k}$ is a classic polar representation with section $\mathbb{R}^k$. Thus, the full isotropy representation on $\nu_z(T^n z)$ is polar with section $\mathbb{R}^k \oplus \mathbb{R}^{n-k} \cong \mathbb{R}^n$. Because every isotropy representation of $M^{2n}$ is equivalent to one of these polar representations, the action on $M^{2n}$ is infinitesimally polar.

    Now let us show that $M/T$ is a Coxeter orbifold. Because the $T^n$-action on $M^{2n}$ is locally standard, the local structure of $M/T$ is entirely determined by the standard $\actionarrow{T^n}{\mathbb{C}^n}$. The orbit map for the standard action, given by $(z_1, \dots, z_n) \mapsto (|z_1|^2, \dots, |z_n|^2)$, identifies $\mathbb{C}^n / T^n$ with
    \[\mathbb{R}_{\geq}^n=\{x\in\mathbb{R}^n \mid x_i\geq 0\ \text{for all}\ i\},\]
    which can be naturally viewed as the quotient of $\mathbb{R}^n$ by the canonical action of $\mathbb{Z}_2^n$ where the generators act by isometric reflections across the coordinate hyperplanes. Consequently, any open subset of $\mathbb{R}_{\geq}^n$ is modeled on the quotient of an open set in $\mathbb{R}^n$ by a finite group generated by reflections. Since every point in $M/T$ has a neighborhood homeomorphic to an open set in $\mathbb{R}_{\geq}^n$, the local models of $M/T$ are quotients of Euclidean space by finite reflection groups.
\end{proof}

\section{Grassmannian blow-ups}



We start by noticing the following application of considerations by Lytchak in \cite{Lytchak2010Geometric}:

\begin{prop}
    If a proper action $\actionarrow{G}{M}$ is infinitesimally polar, then $M/G$ is an orbifold.
\end{prop}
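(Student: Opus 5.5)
The plan is to reduce the statement to a local question via the slice theorem, and then to use the structure of polar representations (Dadok's results, as in Proposition \ref{prop:metric}) to identify each local model of $M/G$ with a quotient of a Euclidean space by a finite reflection group.

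\textbf{Reduction to slices.} Since the action is proper, fix a $G$-invariant Riemannian metric and a point $x\in M$ with compact isotropy group $H=G_x$. By the slice theorem, a $G$-invariant neighborhood of the orbit $Gx$ is equivariantly diffeomorphic to $G\times_H \nu_x$, where $\nu_x=T_xM/T_x(Gx)$ carries the isotropy representation. Hence a neighborhood of $\pi(x)$ in $M/G$ is homeomorphic to a neighborhood of $0$ in $\nu_x/H$. It therefore suffices to show that $\nu_x/H$ is an orbifold chart near $0$, compatible with the charts at nearby points.

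\textbf{Local model.} By hypothesis, together with Proposition \ref{prop:metric}, the representation $\actionarrow{H}{\nu_x}$ is polar with respect to the metric $g_x$. Let $\Sigma\subset\nu_x$ be a section, a linear subspace, and let $W=N_H(\Sigma)/Z_H(\Sigma)$ be its generalized Weyl group. This is a finite group acting on $\Sigma$ (Dadok, Palais--Terng). The key fact, Palais--Terng's analogue of Chevalley restriction, is that the inclusion $\Sigma\hookrightarrow\nu_x$ induces a homeomorphism $\Sigma/W\to\nu_x/H$. I prove it as follows.
\begin{itemize}
\item \emph{Surjectivity.} Every orbit meets $\Sigma$.
\item \emph{Injectivity.} If $v,hv\in\Sigma$, then both $\Sigma$ and $h^{-1}\Sigma$ are sections through $v$. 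Sections through $v$ are permuted transitively by the identity component of the slice group $H_v$, since they are sections of the polar slice representation. So some $k\in H_v$ satisfies $k h^{-1}\Sigma=\Sigma$, and $h$ and $hk^{-1}\in N_H(\Sigma)$ agree on $v$.
\item \emph{Homeomorphism.} The map is continuous, closed and bijective, because the quotients are proper.
\end{itemize}
Thus $\Sigma$ (or a $W$-invariant ball in it) together with the finite group $W$ is an orbifold chart around $\pi(x)$, of dimension equal to the cohomogeneity.

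\textbf{Compatibility and the main obstacle.} The step I expect to require the most care is the compatibility of charts. Take a point $v\in\Sigma$ near $0$. The chart $(\Sigma,W)$ restricted near $v$ should agree with the chart built at $v$ from the slice representation $\actionarrow{H_v}{\nu_v}$. This is where Lytchak's considerations in \cite{Lytchak2010Geometric} enter.
\begin{itemize}
\item The slice representation of a polar representation is polar, with section $\Sigma$, viewed inside $\nu_v$ after translation.
\item Its Weyl group is the stabilizer $W_v$.
\end{itemize}
Together these give an orbifold embedding of $(\Sigma_v,W_v)$ into $(\Sigma,W)$. One must also check that the identifications do not depend on the choice of section at $x$. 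Different sections are $H$-conjugate, so the resulting charts differ by a diffeomorphism intertwining the finite groups. Finally, when $G$ is not compact I would argue componentwise via $G_0$ and $G/G_0$, but properness makes the slice argument work verbatim. Hence $M/G$ is an orbifold.
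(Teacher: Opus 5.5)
Your local analysis is sound. Via the slice theorem and Proposition~\ref{prop:metric} you reduce to a polar representation $\actionarrow{H}{\nu_x}$, and your proof that $\Sigma/W\to\nu_x/H$ is a homeomorphism with $W$ finite is the standard one and is correct.

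\textbf{The gap.} It is in the compatibility step, which is where the orbifold structure actually lives. You check two things. First, inside one fixed linear model $G\times_H\nu_x$, the chart at $[e,v]$ built from the linear slice $v+\nu_v(Hv)$ is a restriction of $(\Sigma,W)$; this is tautological. Second, changing the section at $x$ amounts to conjugation by $H$. But an atlas also contains charts coming from tubes around different orbits $Gx$, $Gx'$, or from different tube identifications at the same orbit.

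Near a point $y$ in such an overlap, the two linear models are related by a $G_y$-equivariant diffeomorphism $f$ of a neighbourhood of $0$ in $\nu_y$, and $f$ is in general not linear. The image $f(\Sigma)$ is transverse to the orbits but is not a section of the linear representation: it is orthogonal to the orbits only for the transported metric. So in general no element of $G_y$ moves it onto the other section $\Sigma'$. The transition map $\lambda\colon\Sigma\to\Sigma'$ determined by $\lambda(u)\in G_y f(u)$ is clearly smooth over regular points. Its smoothness across the walls of $W_y$, however, is a genuine statement. It amounts to something like the smooth Chevalley restriction isomorphism $C^\infty(\nu_y)^{G_y}\cong C^\infty(\Sigma)^{W_y}$ of Palais--Terng, combined with a lifting theorem for diffeomorphisms of $\Sigma/W_y$.

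The two facts you list under ``Lytchak's considerations'' do not supply this. That the slice representation at $v$ is polar with section $\Sigma$ and Weyl group $W_v$ are standard facts about polar representations, and they only identify the abstract local models, not the chart maps. As written, you have shown only that $M/G$ is locally homeomorphic to finite quotients $\Sigma/W$, which is not yet an orbifold structure.

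\textbf{How the paper handles it.} The paper does not run this argument. It uses properness to get an invariant metric, uses Proposition~\ref{prop:metric} to pass to Lytchak's Riemannian notion of infinitesimal polarity, and then invokes Lytchak. The mechanism behind that result, which the paper also uses later, is the Grassmannian blow-up $\hat M$ of infinitesimal sections. This is a smooth manifold on which $G$ acts properly, with $\hat M/G=M/G$, and with the isotropy group $N_{G_x}(\Sigma)$ acting on the normal space of its orbit through the finite group $W(\Sigma)$. There, ordinary slices are themselves orbifold charts, and changes of slice are handled by the usual equivariant diffeomorphisms between slices, so the lifting problem above never arises. To complete your route you would need to prove that lifting statement, or pass to $\hat M$ yourself.
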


Note that properness ensures there is a $G$-invariant metric $g$ on $M$. Then the infinitesimally polar action in our topological sense is infinitesimally polar in the Riemannan sense of Lytchak (with respect to $g$), by Proposition \ref{prop:metric}.

We will now consider an effective, infinitesimally polar $T^n$-manifold $M^m$. At the moment not necessarily $m=2n$; we denote by $q$ the codimension $m-n$. Hence $M/T$ is an orbifold, as we just saw. We fix a $T$-invariant metric $g$ on $M$. The isotropy representations $\actionarrow{T_x}{\nu_xGx}$ on the normal bundles with respect to the chosen metric are polar. Its (linear) sections are called infinitesimal sections. Let $\whM$ be the set of all infinitesimal sections with respect to the chosen metric. If $x$ is a regular point, then the infinitesimal section through $x$ coincides with $\nu_x Tx$. We call $\whM$ the Grassmannian blow-up of $(M,T,g)$ and let $\rho \colon \whM \to M$ be the projection. It is a closed submanifold of the Grassmannian $G_q(M)$ of $q$-planes of $TM$ (see \cite[Section 4.2]{Lytchak2010Geometric} for details). Up to diffeomorphism it is independent of the choice of an invariant metric. Then the natural action of $T$ on $\whM$ is differentiable and locally-free. The map $\rho$ is $T$-equivariant, hence $\whM/T=M/T$. The projection $\hat\pi \colon \whM\to \whM/T=M/T$ is a $T$-orbibundle.

We will consider the {\em Euler class}
\[e=e(M,T)\in H^2_{\text{orb}}(M/T,\ZZ^n),\]
which classifies the principal $T$-orbibundle $\hat\pi \colon \whM\to M/T$, and the {\em real Euler class}
\[e_\RR\in H^2_{\text{orb}}(M/T,\RR^n),\]
which is the image of $e$ via the natural coefficient homomorphism $H^2_{\text{orb}}(M/T,\ZZ^n) \to H^2_{\text{orb}}(M/T,\RR^n)$. It is worth noticing that, by a standard result of Satake (\cite[Theorem 1]{satake}), one has the isomorphism $H^2_{\text{orb}}(M/T,\RR^n)\cong H^2(M/T,\RR^n)$, between orbifold and singular cohomology with real coefficients. Thus, we may seamlessly view $e_\RR$ as an element of $H^2_{\text{orb}}(M/T,\RR^n)\cong H^2_{\text{orb}}(M/T)\otimes \RR^n$.

Since the action is locally free, there is a $T$-invariant $\mathfrak{t}$-valued connection form $\omega\in\Omega^1( \hat{M})\otimes \mft$ (see \cite[Appendix A2, Section 3]{guilleminGinzburgKarshon}). Since $\hat\pi^*$ provides an isomorphism (of differential graded algebras) between $\Omega^*(M/T)\otimes \mft$ and the complex $\Omega_{\text{bas}}(\whM, T)\otimes \mft$ of $\mft$-valued $T$-basic forms on $\whM$, and $d\omega$ is $T$-basic, there is a unique $\Omega\in \Omega^2(M/T)\otimes \mft$ with $\pi^*\Omega=d\omega$, called the curvature $2$-form of the orbibundle $\hat\pi$. By classical Chern--Weil theory adapted to orbibundles, 
\[e_\RR(M,T) = [\Omega].\] Remember that the $T$-invariant connection form $\omega$ corresponds one-to-one to a $T$-invariant transverse distribution, the associated {\em horizontal distribution} which is integrable if and only if $\Omega=0$. Moreover, if $[\Omega]=0$, then there is a connection $\omega$ such that the horizontal distribution $\ker\omega$ is integrable. We are now in position to state our main results.

\begin{thm}\label{thm-infpolar}
  An infinitesimal polar $T$-manifold is polar for some $T$-invariant metric if and only if its real Euler class vanishes.
\end{thm}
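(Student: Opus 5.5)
The plan is to prove both directions through the correspondence between $T$-invariant horizontal distributions on $\whM$ and connection forms, set up just before the statement.

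\emph{Necessity.} Suppose $g$ is a $T$-invariant metric for which the action is polar. The first task is to show that the infinitesimal sections then assemble into an integrable, $T$-invariant distribution on $\whM$ that is transverse to the orbits. At a regular point $x$ the normal space $\nu_x Tx$ is tangent to the section through $x$. At a singular point $x$, the tangent space $T_x\Sigma$ of any section through $x$ is an infinitesimal section of the slice representation. So every point $\hat x=(x,\sigma)\in\whM$ lies on the lift $\hat\Sigma=\{(y,T_y\Sigma)\}$ of some section $\Sigma$ through $x$ with $T_x\Sigma=\sigma$. These lifts are $q$-dimensional immersed submanifolds of $\whM$. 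Their $T$-translates are again lifts of sections, because $T$ permutes sections. I expect them to form a foliation $\hat{\mathcal F}$ of $\whM$ that is $T$-invariant and transverse to the $T$-orbits; this is where the paper's reference to Lytchak \cite{Lytchak2010Geometric} is used. The main point is that lifts through distinct points of the same fibre $\rho^{-1}(x)$ do not intersect, since they have different tangent planes $\sigma$ at $x$. Then $\mathcal H=T\hat{\mathcal F}$ is an integrable complement to the orbits, and its connection form $\omega$ satisfies $d\omega=\hat\pi^*\Omega$ with $\Omega=0$. By Chern--Weil, $e_\RR=[\Omega]=0$.

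\emph{Sufficiency.} Suppose $e_\RR=0$. By the remark preceding the theorem there is a connection $\omega$ whose horizontal distribution $\mathcal H=\ker\omega$ is integrable and $T$-invariant. The goal is a $T$-invariant metric on $M$ for which the horizontal leaves push down under $\rho$ to sections. The plan is:
\begin{itemize}
\item[(i)] On the regular part $M_0$, where $\rho$ is a diffeomorphism, transport $\mathcal H$ to an integrable $T$-invariant distribution $\mathcal H_0$ complementary to the orbits.
\item[(ii)] Define a metric on $M_0$ by declaring $\mathcal H_0\perp T(Tx)$, keeping the original $g$ on $\mathcal H_0$ (via projection to $\nu Tx$) and on the orbit directions.
\item[(iii)] Show that this metric extends smoothly across the singular strata.
\end{itemize}
With such a metric the leaves of $\mathcal H_0$ are totally geodesic, because for torus actions orbit-orthogonality of an integrable distribution gives a Riemannian submersion structure with $T$ acting by isometries. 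Their closures, pushed down by $\rho$, are then immersed $q$-dimensional submanifolds meeting all orbits orthogonally, that is, sections.

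The main obstacle is step (iii): extending the metric smoothly across singular orbits. To handle it, I would work in slice coordinates $T\times_{T_x}\nu_x$. There the slice representation is polar, and I would adjust $\omega$ by a $T$-invariant basic exact correction $\hat\pi^*d f$ with $f$ a $\mft$-valued orbifold function. The aim is to arrange, near $\rho^{-1}(x)$, that $\mathcal H$ coincides with the "flat" horizontal distribution of the linear model, whose leaves are the lifts of the linear infinitesimal sections $T_x\sigma$ translated by the $T$-action. Flatness gives closed connection differences, and the vanishing of $e_\RR$ together with local contractibility lets these local gauge changes be glued with a partition of unity on $M/T$. Once $\mathcal H$ agrees with the linear model near singular orbits, the metric agrees there with an invariant metric for the polar slice representation, which is smooth. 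The remaining task is to check that such gluing preserves integrability, which holds because the differences are closed basic $1$-forms.
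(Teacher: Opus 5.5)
Your necessity direction is the paper's chain (polar $\Rightarrow$ $T$-invariant transversals of $\whM$ $\Rightarrow$ flat connection $\Rightarrow$ $e_\RR=0$) and is fine. The gap is step (iii) of sufficiency, which carries all the content.

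\textbf{Matching linear models fails on overlaps.} You cannot make $\mathcal H$ agree with ``the linear model'' near \emph{every} singular orbit. The model in a tube $U_i\cong T\times_{T_{p_i}}\nu_{p_i}$ is built from the normal exponential map of $g$ at $Tp_i$ and a section $\Sigma_i$ of that slice representation. Singular orbits generally form strata of positive dimension; in a quasitoric manifold, for instance, they include the preimages of the facets of $P$. So any family of tubes covering the singular set overlaps along singular orbits. On $\hat U_i\cap\hat U_j$ the two model connections differ by the closed basic form $\hat\pi^*d(f_i-f_j)$, which is in general nonzero. Your glued form $\omega+\hat\pi^*d(\sum_i\psi_if_i)$ is flat, but in the transition regions of the partition of unity it equals neither model. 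Those regions necessarily contain singular orbits, and there you have no argument at all.

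\textbf{Smoothness of the (ii)-metric is unproved.} Even where $\mathcal H$ does agree with a model, the metric of step (ii) is built from the fixed $g$: the $g$-normal projection $v\mapsto v^\nu$ and the splitting $\mathcal H_0\oplus T(Tx)$. It is not built from the flat slice metric, so it is not the model metric of the slice representation. Both ingredients are discontinuous at singular points. Hence your claim that the result ``agrees with an invariant metric for the polar slice representation, which is smooth'' is exactly the point that still needs proof.

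\textbf{Closures are the wrong objects.} Do not take closures of leaves in $M_0$: a flat connection with dense holonomy has dense leaves. Instead project the leaves $\hat N$ of $\mathcal H$ on all of $\whM$. Then check that $\rho|_{\hat N}$ is an immersion, using $\ker d\rho_{\hat p}=\mft_p\hat p\subset T_{\hat p}(T\hat p)$.

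\textbf{How the paper avoids this.} It normalizes the \emph{metric} locally, not the connection, and keeps the horizontal foliation fixed. In $\hat U_i$ the plaque $\hat N_i$ and the lifted model section $\hat\Sigma_i$ are both transversals, so a gauge transformation $\hat\varphi_i$ carries one to the other. By Haefliger--Salem, $\hat\varphi_i(\hat x)=F_i(\hat\pi(\hat x))\hat x$ with $F_i$ smooth on $U_i/T$. It therefore descends to an equivariant diffeomorphism $\varphi_i$ of $U_i$ with $\varphi_i(N_i)=\Sigma_i$. Let $h_i$ be the model metric for which $\Sigma_i$ is a section. Then $g_i=\varphi_i^*h_i$ is smooth and makes $N$ orthogonal to the orbits in $U_i$. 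Since the condition $TN\perp T(Tx)$ is linear in the metric, it survives the average $g=\sum_i\phi_ig_i$. Agreement with a model distribution, by contrast, cannot be averaged.

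\textbf{A partial repair of your route.} Your formula (ii) is natural under such gauge maps. Because $T$ acts by $g$-isometries, $\varphi^*$ of the (ii)-metric for $\varphi_*\mathcal H_0$ is the (ii)-metric for $\mathcal H_0$. So you could drop the global modification of $\omega$ entirely. You would still owe a proof that the (ii)-metric is smooth for the model distribution in a single tube.
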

We prove this theorem in Section \ref{section main proof}.


\begin{cor}\label{cor-toricpolar}
  A quasitoric manifold admits an invariant Riemannian metric for which the action is polar. Moreover, the orbit space is a Riemannian Coxeter orbifold with respect to the induced metric.
\end{cor}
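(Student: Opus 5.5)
The corollary follows from Theorem \ref{thm-infpolar} once we check its hypotheses and show that the obstruction vanishes.

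\emph{Step 1: infinitesimal polarity.} Let $M^{2n}$ be quasitoric. By Proposition \ref{prop:quasitoric infinitesimally polar}, the action $\actionarrow{T^n}{M^{2n}}$ is infinitesimally polar. Since $M$ is closed and $T^n$ is compact, the action is proper. The torus action is effective, which is implicit in local standardness: the standard representation is effective and $\rho$ is an automorphism. So we are exactly in the setting of Section 3, with $q=n$. In particular $M/T$ is an orbifold, and the Euler class $e(M,T)\in H^2_{\text{orb}}(M/T,\ZZ^n)$ of the orbibundle $\hat\pi\colon\whM\to M/T$ is defined.

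\emph{Step 2: the obstruction vanishes.} By Satake's isomorphism, $H^2_{\text{orb}}(M/T,\RR^n)\cong H^2(M/T,\RR^n)$. By definition of a quasitoric manifold, $M/T$ is homeomorphic to a simple polytope, hence contractible, so $H^2(M/T,\RR^n)=0$. Therefore $e_\RR(M,T)=0$. Theorem \ref{thm-infpolar} then gives a $T$-invariant metric $g'$ on $M$ for which the action is polar. This is the first assertion.

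\emph{Step 3: the Coxeter orbifold structure.} Take a section $\Sigma$ for $g'$ and use the standard facts on polar actions. The quotient $M/T$, with the metric induced from $g'$, is isometric to $\Sigma/W$, where $W$ is the generalized Weyl group. Locally, near a point $\pi(x)$, it is isometric to the quotient of the infinitesimal section $\nu_xTx\cap T_x\Sigma$ by the Weyl group of the slice representation, pushed through the exponential map. In the quasitoric case Proposition \ref{prop:quasitoric infinitesimally polar} identifies each slice representation with $\mathbb{C}^k\oplus\RR^{n-k}$. Its Weyl group is $\ZZ_2^k$, generated by reflections in the coordinate hyperplanes of the section $\RR^k\oplus\RR^{n-k}$. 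These reflections are the restrictions to $\Sigma$ of isometries of $(M,g')$, namely elements of $T_x$ preserving $\Sigma$. Hence they act on $\Sigma$ near $x$ as Riemannian reflections, and each local chart of $M/T$ is a quotient of a Riemannian manifold by a finite group generated by isometric reflections. This is the definition of a Riemannian Coxeter orbifold.

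\emph{Main obstacle.} I expect the delicate point to be Step 3. One must confirm that the local orbifold charts are given by the section with its induced metric. One must also confirm that the local isotropy groups of $\Sigma/W$ coincide with the slice Weyl groups, in particular that $\Sigma$ is embedded near $x$ and that no extra finite elements act. I would handle this using the slice theorem at $x$ for the metric $g'$: the section through $x$ is $\exp_x$ of the infinitesimal section, and the slice Weyl group acts on it by isometries fixing reflecting hyperplanes. By contrast, Steps 1 and 2 are immediate from earlier results.
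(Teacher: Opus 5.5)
Your Steps 1 and 2 are exactly the paper's proof. Infinitesimal polarity comes from Proposition \ref{prop:quasitoric infinitesimally polar}; Satake's isomorphism and contractibility of the polytope give $e_\RR(M,T)=0$; Theorem \ref{thm-infpolar} then yields the polar metric. The paper does not argue the ``Moreover'' clause separately, relying on that proposition's identification of $M/T$ as a Coxeter orbifold together with standard facts on polar quotients. Your Step 3 builds local charts $\Sigma_x/\ZZ_2^k$ via the slice theorem for the polar metric, with $\ZZ_2^k\subset T_x$ acting by isometric reflections. That is a correct and useful supplement rather than a different route.
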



\begin{proof}
  A quasitoric manifold is locally standard, therefore infinitesimally polar, by Proposition \ref{prop:quasitoric infinitesimally polar}. Since $P=M/T$ is contractible, $H^2_{\text{orb}}(M/T,\RR^n)\cong H^2(M/T,\RR^n)=0$ and consequently the real Euler class is trivial.
\end{proof}

\section{Proof of Theorem \ref{thm-infpolar}}\label{section main proof}
Before we come to the proof, we look at the tubular neighborhoods for the action of $T$ both on $M$ and $\whM$ and their local orbit spaces. Assume that $\actionarrow{T}{M}$ is infinitesimally polar and let $g$ be a $T$-invariant metric on $M$.
Let $\varepsilon_i>0$ be so small that the normal exponential map $\exp^\perp:\nu^{\varepsilon_i} Tp_i\to M$ of $Tp_i$ is a diffeomorphism onto its image $U_i$. Here $\nu^{\varepsilon_i}$ denotes the normal vectors of length smaller than $\varepsilon_i$. The tubular neighborhood $U_i$ of $Tp_i$ is $T$-equivariantly identified with the disc bundle $\nu^{\varepsilon_i}Tp_i\cong T \times_{T_{p_i}} \nu^{\varepsilon_i}_{p_i} T p_i$. We consider the Grassmannian blow-up $\whM$ with respect to $g$. Let $\Sigma_i$ be a (flat) section in the slice $\nu^{\varepsilon_i} Tp_i$ with respect to the inner product $g_{p_i}$. Let $\hat p_i$ be the corresponding element in $\whM$. Let $\hat{U}_i:=\rho^{-1} (U_i)$. We consider the following commutative diagram, where the vertical lines pass to the respective orbit spaces, which are locally equivalent to the orbit spaces of the slice representations:
    $$\begin{tikzcd}[column sep=large]
    \hat{p}_i \in \hat{U}_i \subset \whM \arrow[r, "\rho"] \arrow[d, "\hat{\pi}"'] & M \supset U_i \ni p_i \arrow[d, "\pi"]\\
    \nu_{\hat{p}_i}^{\varepsilon_i}(T \hat{p}_i) / T_{\hat{p}_i} \subset \whM/T \arrow[r, "\tilde{\rho}"'] & M/T \supset U_i/T \cong \nu^{\varepsilon_i}T p_i/T_{p_i}\cong \Sigma_i / W(\Sigma_i).
\end{tikzcd}$$
The equivariant map $\rho$ induces the map $\tilde{\rho}$ on the quotients. Notice that $\actionarrow{T_{\hat p_i}}{\nu_{\hat p_i}Tp_i}$ and $\actionarrow{W(\Sigma_i)}{\Sigma_i}$ are equivalent representations. To see this, observe that the element $\hat{p}_i \in \whM$ corresponds to the section $\Sigma_i$. Since $T$ is abelian, the stabilizer $T_{\hat p_i}$ consists of the elements $t \in T$ that preserve $\Sigma_i$, meaning $T_{\hat{p}_i} = N_T(\Sigma_i)$. Because the action on $\whM$ is locally free, $T_{\hat p_i}$ is discrete and thus identifies with the effective action of the generalized Weyl group $W(\Sigma_i) = N_T(\Sigma_i)/Z_T(\Sigma_i)$. Thus $T_{\hat{p}_i}=W(\Sigma_i)$, because $Z_T(\Sigma_i)$ is trivial by effectiveness of the $T$-action. From this, we conclude that $\hat M/T\cong M/T$, as orbifolds.

We will now prove Theorem \ref{thm-infpolar}.

\begin{proof}
We claim that the following conditions are equivalent:
\begin{enumerate}
  \item The real Euler class $e_\RR(M,T)\in H^2_{\text{orb}}(M/T,\RR^n)$ vanishes,
  \item there is a flat $T$-connection of the $T$-orbibundle $\hat \pi \colon \whM \to \whM/T=M/T$,
  \item there is an {\em integrable} $T$-invariant distribution transverse to the $T$-orbits of $\whM$,
  \item there is a $T$-invariant set of transversals of $(\whM,T)$,
  \item there is a polar metric for $(M,T)$.
\end{enumerate}

Conditions (1) through (4) are equivalent by standard differential geometry: by Chern-Weil theory, the real Euler class vanishes if and only if the orbibundle admits a flat connection (giving $(1) \Leftrightarrow (2)$). Geometrically, a flat connection of a $T$-orbibundle corresponds exactly to an integrable, $T$-invariant horizontal distribution transverse to the fibers (which is $(2) \Leftrightarrow (3)$). By the Frobenius theorem, this distribution integrates to a $T$-invariant foliation by transverse submanifolds (hence $(3) \Leftrightarrow (4)$).

To see that $(5)\Rightarrow (4)$, assume there is a polar metric for $(M,T)$ and let $\whM$ be the corresponding Grassmannian blow-up. Because the metric is polar, it admits a family of global sections. The natural lifts of these sections $\Sigma$ into the Grassmannian, defined by $\hat{\Sigma} = \{ T_p\Sigma \mid p \in \Sigma \}$, form horizontal leaves that are everywhere transverse to the $T$-orbits on $\whM$. Since the $T$-action maps sections to sections, this family of horizontal leaves constitutes a $T$-invariant set of transversals for $(\whM, T)$.

Finally, let us prove $(4)\Rightarrow (5)$. For a leaf $\hat N$ of the horizontal distribution, the projection $N = \rho(\hat N)$ is an immersed submanifold. In fact, for any $\hat p \in \hat N$, one has the direct sum $T_{\hat p}\hat N \oplus T_{\hat p}(T \hat p)=T_{\hat p}\whM$ and $T_{\hat p}(T \hat p)=\mft\hat p \supset \mft_{p} \hat p = \ker d\rho_{\hat p}$, where $p:=\rho(\hat p)$, because $\rho^{-1}(p)=T_p\hat p$, the orbit of the isotropy group $T_p$ through $\hat p$. The mutual inclusion of the latter is a consequence on the one side of the equivariance of $\rho$ and on the other side of the transitivity of the $T_p$-action on the set of infinitesimal sections through $p$. Thus $T_{\hat p}\hat N\cap \ker d\rho_{\hat p}=\{0\}$ and $N$ is an immersed submanifold of $M$. Our aim is to construct a global $T$-invariant metric $g$ for which $N$ is orthogonal to the orbits; the action of $T$ on $(M,g)$ will then be polar with section $N$.

Choose a $T$-invariant Riemannian metric on $M$. Let $\{U_i\}$ be a locally finite $T$-invariant covering of $M$ by tubular neighborhoods of orbits $T p_i$ as previously. Recall the $T$-equivariant identification $U_i\cong T \times_{T_{p_i}} \nu^{\varepsilon_i}_{p_i} T p_i$. Since the action is infinitesimally polar, the representation $\actionarrow{T_{p_i}}{\nu_{p_i}^{\varepsilon_i}T p_i}$ is polar with respect to the induced inner product. Let $\Sigma_i \subset \nu_{p_i}^{\varepsilon_i}T p_i$ be a section for this linear action. We equip $\nu_{p_i}^{\varepsilon_i}T p_i$ with the corresponding flat metric, and $T$ with an invariant metric. Because $T_{p_i}$ acts isometrically on the product $T \times \nu_{p_i}^{\varepsilon_i}T p_i$, there is a unique $T$-invariant Riemannian metric $h_i$ on the quotient $U_i \cong T \times_{T_{p_i}} \nu^{\varepsilon_i}_{p_i} T p_i$ such that the natural projection is a Riemannian submersion. Crucially, because $\Sigma_i$ intersects the linear $T_{p_i}$-orbits orthogonally, the tangent bundle of the product $\{1\} \times \Sigma_i$ consists entirely of horizontal vectors with respect to this submersion. Therefore, the projection of $\{1\} \times \Sigma_i$ yields a section that is everywhere orthogonal to the $T$-orbits in $U_i$, meaning $U_i$ is $h_i$-polar. In fact, this section coincides with the section $\Sigma_i$ of $\nu^{\varepsilon_i}_{p_i}T p_i$ considered as contained in $U_i$.

Let $\hat N$ be a horizontal leaf in $\whM$. For each $U_i$, let $\hat p_i\in \hat N\cap \rho^{-1}(p_i)$ and let $\Sigma_i$ be the corresponding infinitesimal section. Let $\hat N_i$ be the connected component of $\hat N$ in $\hat U_i$ containing $\hat p_i$. We define the local open set $\hat U_i := \rho^{-1}(U_i)$ in $\whM$. Let $\hat \Sigma_i'=\{T_p\Sigma_i\mid p\in\Sigma_i\}$ be the canonical lift of the section $\Sigma_i$ to the blow-up $\hat U_i'$ of $U_i$ with respect to the metric $h_i$ and let $\hat\Sigma_i:= I_{h_ig}(\hat \Sigma_i')$, where $I_{h_i,g}$ is as defined in \cite[Section 4.2]{Lytchak2010Geometric}, sending $\hat U_i'$ to $\hat U_i\subset \hat M$. Since both $\hat N_i$ and $\hat \Sigma_i$ are local transversals for the $T$-orbits in $\hat U_i$, there exists a unique, smooth $T$-equivariant diffeomorphism $\hat\varphi_i \colon \hat U_i \to \hat U_i$ that maps $\hat N_i$ to $\hat \Sigma_i$ while preserving the orbits; in other words it's a gauge transformation for $\hat\pi|\hat U_i$. Now, since the action on $\hat U_i$ is locally free, we can apply Theorem 3.1 of \cite{HS 1991}: it guarantees the existence of a smooth map $F_i \colon  \hat U_i/T \to T$ such that $\hat\varphi_i(\hat x) = F_i(\hat\pi(\hat x)) \hat x$. Recall that the map $F_i$ on the quotient space is called smooth if $F_i\circ \hat \pi$ is smooth.

Identifying $\hat U_i/T = U_i/T$, we define a map $\varphi_i \colon  U_i \to U_i$ on the base manifold by $\varphi_i(x) = F_i(\pi(x)) x$. Because $F_i$ and the orbit projection $\pi$ are smooth, $\varphi_i$ is a well-defined, smooth $T$-equivariant diffeomorphism. Furthermore, because $\hat\pi = \pi \circ \rho$, we have $\rho \circ \hat\varphi_i = \varphi_i \circ \rho$. Since $\hat \varphi_i$ maps $\hat N_i$ to $\hat \Sigma_i$, it follows that $\varphi_i$ maps $N_i$, the projection of the plaque $\hat N_i$, smoothly to the section $\Sigma_i$. This follows from $\rho \circ \hat\varphi_i = \varphi_i \circ \rho$ and $\rho(\hat \Sigma_i)=\rho(I_{h_ig}(\hat \Sigma_i'))=\rho(\hat \Sigma_i')=\Sigma_i$.

We now define a new local metric on $U_i$ by the pullback $g_i := \varphi_i^* h_i$. Because $\varphi_i(N_i)=\Sigma_i$, and $\Sigma_i$ is $h_i$-orthogonal to the $T$-orbits in $U_i$ by construction, it follows that $N_i$ is $g_i$-orthogonal to the $T$-orbits. Finally, let $\{\phi_i\}$ be a $T$-invariant partition of unity subordinate to $\{U_i\}$. We define the global $T$-invariant metric $g := \sum_i \phi_i g_i$. Because $N$ is orthogonal to the $T$-orbits with respect to every individual metric $g_i$, it remains orthogonal to the orbits with respect to $g$. The transversal $N$ meets every orbit, and this intersection is everywhere orthogonal, proving that $\actionarrow{T}{M}$ is polar with respect to $g$.\end{proof}

\end{document}